\newtheorem{theorem}{Theorem}[section]
\newtheorem{lemma}[theorem]{Lemma}
\newtheorem{corollary}[theorem]{Corollary}
\renewcommand{\ge}{\geqslant}
\DeclareMathOperator{\size}{size}
\DeclareMathOperator{\rank}{rank}
\DeclareMathOperator{\orien}{o}
\newcommand{\ff}{\mathbb{F}}
\newcommand{\kk}{\mathbb{K}}
\newcommand{\pp}{\mathbb{P}}
\newcommand{\cc}{\mathbb{C}}
\newcommand{\rr}{\mathbb{R}}
\title{Topological classification of affine operators on unitary and Euclidean spaces}
\author {Tetiana Budnitska \\
Institute of Mathematics, Kyiv, Ukraine\\ budnitska\_t@ukr.net}
\date{}
\begin{document}

\maketitle

\begin{abstract}
We study affine operators on a unitary or Euclidean space $U$ up to topological conjugacy. An affine operator is a map $f: U \to U$ of the form $f(x)={\cal A}x+b$, in which ${\cal A}: U\to U$ is a linear operator and $b\in U$. Two affine operators $f$  and $g$ are said to be topologically conjugate if $g=h^{-1} f h$ for
some homeomorphism $h: U \rightarrow U$.

If an affine operator $f(x)={\cal A}x+b$ has a fixed point, then $f$ is topologically conjugate to its linear part $\cal A$. The problem of classifying linear operators up to topological conjugacy was studied by Kuiper and Robbin [Topological classification of linear endomorphisms, Invent. Math.~19 (no.\,2) (1973) 83--106] and other authors.

Let $f: U\to U$ be an affine operator without fixed point. We prove that $f$ is topologically conjugate to an affine operator $g: U \to U$ such that $U$ is an orthogonal direct sum of $g$-invariant subspaces $V$ and $W$,
\begin{itemize}
  \item the restriction $g|V$ of $g$ to $V$
is an affine operator that in some orthonormal basis of $V$ has the form
      \[(x_1, x_2, \dots, x_n) \mapsto (x_1+1, x_2, \dots, x_{n-1},\varepsilon x_n)\]
uniquely determined by $f$,
where $\varepsilon =1$ if $U$ is a unitary space, $\varepsilon =\pm 1$ if $U$ is a Euclidean space, and $n\ge 2$ if $\varepsilon =- 1$, and

  \item
the restriction $g|W$ of $g$ to $W$
is a linear operator that in some orthonormal basis of $W$ is  given by  a nilpotent Jordan matrix uniquely determined by $f$, up to permutation of blocks.
\end{itemize}
\medskip

\noindent{\it AMS classification:} 37C15; 15A21
\medskip

\noindent{\it Keywords:}  Affine mappings; Topological conjugacy; Canonical forms

\end{abstract}

\section{Introduction}

We consider the problem of classifying affine operators on a unitary or Euclidean space $V$ up to topological conjugacy.
An
\emph{affine operator} $f:V\to V$ is a mapping of the form $f(x)={\cal A}x+b$, where ${\cal A}:V\to V$ is a linear operator and $b\in V$.

For simplicity, we always take $V=\mathbb F^n $ with $\ff=\cc$ or $\rr$ and the usual scalar product, then $f:\ff^n \to \ff^n$ has the form
\[
f(x)= Ax+b, \qquad A \in \ff^{\,n\times n}, \  b \in \ff^n.
\]

Two affine operators $f$, $g: \ff^n \rightarrow \ff^n$ are said to be \emph{conjugate} if there is a bijection $h: \ff^n \rightarrow \ff^n$ that transforms $f$ to $g$; that is,
\begin{equation}\label{jjj}
g=h^{-1} f h\qquad \text{(with respect to function composition)}.
\end{equation}
They are
\begin{itemize}
  \item[(a)] \emph{linearly conjugate} if $h$ in \eqref{jjj} is a linear operator;
  \item[(b)] \emph{affinely conjugate} if $h$ is an affine operator;
  \item[(c)]
  \emph{biregularly  conjugate} if $h$ is a \emph{biregular  map}, which means that $h$ and $h^{-1}$ have the form
\begin{equation}\label{rti}
(x_1,\dots,x_n) \mapsto (\varphi _1(x_1, \dots, x_n),\ldots,\varphi _n(x_1, \ldots, x_n)),
\end{equation}
in which all $\varphi _i$ are polynomials over $\mathbb F$;

  \item[(d)] \emph{topologically conjugate} if $h$ is a \emph{homeomorphism}, which means that $h$ and $h^{-1}$ are continuous and bijective.
\end{itemize}

Conjugations (a)--(c) are topological. Moreover,
\[ (a)\Rightarrow (b)\Rightarrow (c) \Rightarrow (d);
\]
that is,
linear conjugacy implies affine conjugacy implies biregular conjugacy implies topological conjugacy.

Let us
survey briefly known results on classifying affine operators up to conjugations (a)--(d):
\bigskip

(a) Each transformation of \emph{linear conjugacy} with $y=Ax+b$ corresponds to a change of the basis in $\ff^n$ and has the form
\begin{equation}\label{jst}
(A,b)\mapsto (S^{-1}AS,S^{-1}b),\qquad S\in\mathbb F^{n\times n}\text{ is nonsingular}.
\end{equation}
A canonical form of affine operators with respect to these transformations is easily constructed: if $\mathbb {F=C}$, then we can take $A$ in the Jordan canonical form and reduce $b$ by those transformations \eqref{jst} that preserve $A$; that is, by transformations $b\mapsto S^{-1}b$ for which  $S^{-1}AS=A$. Since $S$ commutes with the Jordan matrix $A$, it has the form described in \cite[Section VIII, \S 1]{Gantmacher}.
\bigskip

(b) Each transformation of \emph{affine conjugacy} corresponds to an affine change of the basis in $\ff^n$.
We say that an affine operator $x\mapsto Ax+b$ is \emph{nonsingular} if its matrix $A$ is nonsingular. Blanc~\cite{Blanc} proved that nonsingular affine operators $x\mapsto Ax+b$ and $x\mapsto Cx+d$ over an algebraically closed field of characteristic~$0$ are affinely conjugate if and only if their matrices $A$ and $C$ are \emph{similar}; i.e., $S^{-1}AS=C$ for some nonsingular $S$.
\bigskip

(c)
Blanc~\cite{Blanc} also obtained classification of nonsingular affine ope\-rators over an algebraically closed field $\mathbb K$ of characteristic~$0$ up to \emph{biregular conjugacy}:
\begin{itemize}
  \item two nonsingular affine operators over $\mathbb K$ with fixed points are biregularly conjugate if and only if their matrices are similar ($p$ is called a \emph{fixed point} of $f$ if $f(p)=p$);

  \item each nonsingular affine operator $f:\mathbb K^n\to \mathbb K^n$ without fixed point is biregularly conjugate to an ``almost-diagonal'' affine operator
      \begin{equation}\label{diag-autom}
      (x_1, x_2, \ldots, x_n)\mapsto (x_1+1, \lambda_2 x_2, \dots, \lambda_n x_n),
      \end{equation}
in which $1,\lambda_2, \dots, \lambda_n\in \mathbb K\setminus 0$ are all eigenvalues of the matrix of $f$ repeated according to their multiplicities. The affine operator \eqref{diag-autom} is uniquely determined by $f$, up to permutation of $\lambda_2,\dots,\lambda_n$.
\end{itemize}

(d) Affine operators on $\mathbb R^2$ were classified  up to \emph{topological conjugacy} by Ephr\"amowitsch \cite{Ephr}. In the present paper, we extend this classification to affine operators on $\mathbb R^n$ and $\mathbb C^n$. In Sections
\ref{topol-lin-R} and \ref{topola}, we classify affine operators of the following two types, respectively:
\bigskip

  \emph{Type 1: affine operators that have fixed point and have no eigenvalue being a root of $1$.} \
The problem of classifying affine operators with fixed point up to topological conjugacy is the problem of classifying all linear operators up to topological conjugacy. Indeed, each linear operator $x\mapsto Ax$  can be considered as the affine operator $x\mapsto Ax+0$ with the fixed point $x=0$. Conversely, if affine operators are considered up to topological conjugacy, then each $x\mapsto Ax+b$ with a fixed point can be replaced by its linear part $x\mapsto Ax$ since by Lemma \ref{pro-neryx-tochk-F-n} from Section \ref{topol-lin-R} they are topologically conjugate.

Kuiper and Robbin~\cite{Kuip-Robb, Robb} obtained a criterion of topological conjugacy of linear operators over $\rr$ without eigenvalues that are roots of~$1$. In Theorem~\ref{klas_m}, we recall their criterion, extend it to linear operators over $\mathbb C$, and give a canonical form for topological conjugacy of a linear operator over $\mathbb R$ and $\mathbb C$  without eigenvalues that are roots of $1$.

For simplicity, we do not consider linear operators with an eigenvalue being a root of $1$; the problem of topological classification of such operators was studied  by Kuiper and Robbin~\cite{Kuip-Robb, Robb}, Cappell and Shaneson~\cite{Capp-conexamp, Capp-2th-nas-n<=6, Capp-big-n<6, Capp-Contemp-n<6,Capp-n=6}, Hsiang and Pardon~\cite{Pardon}, Madsen and Rothenberg~\cite{Madsen}, and Schultz~\cite{Schultz}.
\medskip

\emph{Type 2: affine operators without fixed point.} \

%{\rm(a)} Let $f(x)=Ax+b$ and $g(x)= Cx+d$ be affine operators over $\mathbb{F=C}$ or $\mathbb R$ without fixed points. %Let $A_{\textstyle *}, A_0$ and $C_{\textstyle *},C_0$ be constructed by $A$ and $C$, as in \eqref{+-0a}.

In Theorem \ref{klik} we prove that each affine operator $f$ over $\mathbb{F=C}$ or $\mathbb R$ without fixed point is topologically conjugate to exactly one affine operator of the form
\begin{equation*}\label{htw-}
x\mapsto (I_k\oplus J_0)x + [1,0,\dots,0]^T
\end{equation*}
or, only if $\mathbb {F=R}$,
\begin{equation*}\label{htw1-}
x\mapsto (I_k\oplus [\,-1\,]\oplus J_0)x + [1,0,\dots,0]^T,
\end{equation*}
in which $k\ge 1$ and $J_0$ is a nilpotent Jordan matrix uniquely determined by $f$, up to permutations of blocks $(J_0$ is absent if $f$ is bijective$)$.

\bigskip

For each square matrix $A$ over $\mathbb {F\in\{C,R\}}$, there are a nonsingular matrix $A_{\textstyle *}$ and a nilpotent matrix $A_0$ over $\mathbb F$ such that
\begin{equation}\label{+-0a}
  A\quad\text{is similar to}\quad  A_{\textstyle *}\oplus A_0,
\end{equation}

We summarize criteria of topological conjugacy of affine operators in the following theorem.

\begin{theorem}\label{main teo}
Let $f(x)=Ax+b$ and  $g(x)=Cx+d$  be affine operators over $\mathbb F=\cc$ or $\rr$.
\begin{itemize}
  \item
Suppose that $f$ and $g$ have fixed points. Then $f$ and $g$ are topologically conjugate if and only if $x\mapsto Ax$ and  $x\mapsto Cx$  are topologically conjugate.

  \item
Suppose that $f$ has a fixed point and $g$ has no fixed point. Then $f$ and $g$ are not topologically conjugate.

  \item Suppose that $f$ and $g$ have no fixed points.
\begin{itemize}
  \item If $\mathbb{F=C}$ then $f$ and $g$ are topologically conjugate if and only if $A_0$ is similar to $B_0$.
  \item If $\mathbb{F=R}$ then $f$ and $g$ are topologically  conjugate if and only if the determinants of $A_{\textstyle *}$ and $C_{\textstyle *}$ have the same sign $($i.e., $\det (A_{\textstyle *}C_{\textstyle *})>0)$ and $A_0$ is similar to $C_0$.
\end{itemize}
\end{itemize}
\end{theorem}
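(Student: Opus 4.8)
The plan is to derive all three cases from the results already stated. A preliminary remark: if $f: \mathbb F^n\to\mathbb F^n$ and $g: \mathbb F^m\to\mathbb F^m$ are topologically conjugate, then $\mathbb F^n$ and $\mathbb F^m$ are homeomorphic, so $n=m$ by invariance of domain; in the third bullet we may therefore assume $f$ and $g$ act on the same space $\mathbb F^n$. For the first bullet I would invoke Lemma \ref{pro-neryx-tochk-F-n}: since $f$ and $g$ have fixed points, $f$ is topologically conjugate to its linear part $x\mapsto Ax$ and $g$ to $x\mapsto Cx$, and transitivity of topological conjugacy gives the equivalence. (Theorem \ref{klas_m} is what one would then use to make the right-hand side explicit, but only under the additional hypothesis that $A$ and $C$ have no eigenvalue equal to a root of $1$.) For the second bullet I would use that the existence of a fixed point is preserved by topological conjugacy: if $g=h^{-1}fh$ and $f(q)=q$, then $g(h^{-1}(q))=h^{-1}f h(h^{-1}(q))=h^{-1}f(q)=h^{-1}(q)$, so $g$ has a fixed point; contrapositively, an $f$ with a fixed point is not topologically conjugate to any $g$ without one.

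For the third bullet I would apply Theorem \ref{klik} to $f$ and to $g$: each is topologically conjugate to exactly one affine operator of the form $x\mapsto(I_k\oplus J_0)x+e_1$ or, only over $\mathbb R$, $x\mapsto(I_k\oplus[-1]\oplus J_0)x+e_1$, where $e_1=[1,0,\dots,0]^T$. Hence $f$ and $g$ are topologically conjugate if and only if these canonical forms coincide, and the remaining task is to read off the parameters of the canonical form of $f$ from the decomposition $A\sim A_*\oplus A_0$ of \eqref{+-0a}. For this I would use the reduction behind the proof of Theorem \ref{klik}: since $A_0$ is nilpotent, $I-A_0$ is invertible, so a linear change of basis followed by a translation makes $f$ affinely conjugate to a direct product $f_*\times\mathcal N$, where $\mathcal N$ is the nilpotent linear operator with matrix $A_0$ and $f_*$ is a fixed-point-free affine operator whose linear part is similar to $A_*$. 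It follows that the nilpotent Jordan matrix $J_0$ in the canonical form of $f$ is similar to $A_0$; that the size of $A_*$ equals $k$, or $k+1$ if the $[-1]$-block occurs; and, over $\mathbb R$, that the $[-1]$-block occurs precisely when $\det A_*<0$.

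Granting this, the comparison is mechanical. Over $\mathbb C$ the canonical form of $f$ is determined by $n$ together with the similarity class of $A_0$; since $n$ is the same for $f$ and $g$, and $n$ equals $k$ plus the size of $A_0$, equality of the canonical forms is equivalent to $A_0$ being similar to $C_0$. Over $\mathbb R$ the canonical form is determined by these same data together with the sign of $\det A_*$; therefore $f$ and $g$ are topologically conjugate if and only if $A_0$ is similar to $C_0$ and $\sign(\det A_*)=\sign(\det C_*)$, that is, $\det(A_*C_*)>0$ (the equality of the $k$'s being then automatic).

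The step I expect to be the main obstacle is the final clause of the second paragraph: over $\mathbb R$, that the presence of the $[-1]$-block in the canonical form of a fixed-point-free affine operator $f_*$ is governed exactly by the sign of $\det A_*$ --- equivalently, that this sign is a genuine topological-conjugacy invariant in the fixed-point-free setting, whereas over $\mathbb C$ no such invariant survives, the nonsingular part always collapsing to $I_k$. I would rely on the proof of Theorem \ref{klik} for this point, which presumably proceeds via an orientation or degree argument rather than through any naive ``cancellation'' of the nilpotent factor $\mathcal N$; everything else in the argument above is bookkeeping with \eqref{+-0a} and with the fact that a direct product of topological conjugacies of the factors is a topological conjugacy of the product.
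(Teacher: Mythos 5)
Your proposal is correct and follows essentially the same route as the paper: bullet one is Lemma \ref{pro-neryx-tochk-F-n} plus transitivity, bullet two is the observation (already used in the proof of that lemma) that topological conjugacy preserves the existence of fixed points, and bullet three is exactly Theorem \ref{klik}, whose part (a) the paper likewise deduces from the canonical form (b) together with the bookkeeping in the reduction of Section \ref{red} relating $A_{\textstyle *}$, $A_0$ to the parameters $k$, $\varepsilon$, $J_0$. The orientation invariant you flag as the main obstacle is indeed where the paper does the real work, in Section \ref{uniq}.
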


\indent

%------------------------------------------------------------------------------------------------------

\section{Affine operators with fixed point}\label{topol-lin-R}

In this section, we give a canonical form under topological conjugacy of an affine operator $f(x)=Ax+b$ that has a fixed point and whose matrix $A$ has no eigenvalue that is a root of unity.

We may, and will, consider only linear operators since
the following lemma reduces the problem of classifying affine operators with fixed point to the problem of classifying linear operators.

\begin{lemma}[\cite{st-1}]\label{pro-neryx-tochk-F-n}
An affine operator $f(x)=Ax+b$ over $\cc$ or $\rr$ is topologically conjugate to its linear part $f_{\text{\rm lin}}(x)=Ax$ if and only if $f$ has a fixed point. If $p$ is a fixed point of $f$, then
\begin{equation*}\label{ies}
f_{\text{\rm lin}}=h^{-1}fh,\qquad h(x):=x+p.
\end{equation*}
\end{lemma}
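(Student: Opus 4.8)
The plan is to prove the two implications separately: the ``if'' direction by exhibiting an explicit conjugating homeomorphism (indeed a translation, hence a conjugacy of the strongest type), and the ``only if'' direction from the elementary fact that the existence of a fixed point is preserved under conjugacy by any bijection.

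First I would do the ``if'' direction. Assume $p$ is a fixed point of $f$, so $Ap+b=p$, i.e.\ $b=(I-A)p$. Set $h(x):=x+p$; this is a homeomorphism of $\ff^n$ with $h^{-1}(x)=x-p$. A direct computation gives
\[
(h^{-1}fh)(x)=h^{-1}\bigl(A(x+p)+b\bigr)=Ax+(Ap+b-p)=Ax=f_{\text{\rm lin}}(x),
\]
so $f_{\text{\rm lin}}=h^{-1}fh$ and $f$ is topologically conjugate to $f_{\text{\rm lin}}$; this also verifies the displayed formula in the statement.

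Next the ``only if'' direction. Suppose $f$ is topologically conjugate to $f_{\text{\rm lin}}$, say $f_{\text{\rm lin}}=h^{-1}fh$ for a homeomorphism $h$, equivalently $f=hf_{\text{\rm lin}}h^{-1}$. Since $f_{\text{\rm lin}}(0)=0$, the origin is a fixed point of $f_{\text{\rm lin}}$, and then $f(h(0))=h\bigl(f_{\text{\rm lin}}(h^{-1}(h(0)))\bigr)=h\bigl(f_{\text{\rm lin}}(0)\bigr)=h(0)$, so $h(0)$ is a fixed point of $f$. (The same computation shows that whenever $g=h^{-1}fh$ for a bijection $h$ and $q$ is a fixed point of $g$, then $h(q)$ is a fixed point of $f$; so ``having a fixed point'' is a conjugacy invariant.)

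I do not expect a genuine obstacle: both implications are elementary. The only point worth flagging is that the ``only if'' direction uses nothing about $h$ beyond bijectivity, while the ``if'' direction yields a conjugacy realized by a translation, which is consistent with the implications $(a)\Rightarrow(b)\Rightarrow(c)\Rightarrow(d)$ recorded in the introduction.
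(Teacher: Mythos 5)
Your proof is correct and follows essentially the same route as the paper: the explicit translation conjugacy $h(x)=x+p$ for the ``if'' direction, and the observation that conjugating bijections carry fixed points to fixed points (applied to $f_{\text{\rm lin}}(0)=0$) for the ``only if'' direction. The only cosmetic difference is that you spell out $h(0)$ as an explicit fixed point of $f$ where the paper merely notes that conjugate maps have the same number of fixed points.
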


\begin{proof}
If $f(p)=p$, then $Ap+b=p$ and
\begin{align*}
(h^{-1}fh)(x)&= (h^{-1}f)(x+p)= h^{-1}(A(x+p)+b)\\&= h^{-1}(Ax+(p-b)+b)=h^{-1}(Ax+p)=Ax=f_{\text{\rm lin}}(x).
\end{align*}

Conversely, if $f$ and $ f_{\text{\rm lin}}$ are topologically conjugate, then $f$ and $f_{\text{\rm lin}}$ have the same number of fixed points. Since $f_{\text{\rm lin}}(0)=0$, $f$ has a fixed point too.
\end{proof}

For each $\lambda \in\mathbb C$, write
\begin{equation*}\label{fyi}
  J_n(\lambda ):=
\begin{bmatrix}
\lambda  &   &        & 0 \\
   1 & \lambda  &     &   \\
    &   \ddots  & \ddots &  \\
0   &     &     1   & \lambda
\end{bmatrix}\qquad \text{($n$-by-$n$)}.
\end{equation*}

For each $n\times n$ complex matrix $A=[a_{kl}+b_{kl}i]$, $a_{kl},b_{kl}\in\mathbb R$, we write
\begin{equation}\label{fff}
\overline A=[a_{kl}-b_{kl}i]
\end{equation}
and denote by $A^{\mathbb R}$ the \emph{realification} of $A$; that is, the $2n\times 2n$ real matrix obtained from $A$ by replacing each entry $a_{kl}+b_{kl}i$ with the block
\begin{equation}\label{fsy}
\begin{matrix}
 a_{kl} & -b_{kl}\\
  b_{kl} & a_{kl} \\
\end{matrix}
\end{equation}

Each square matrix $A$ over $\mathbb {F\in\{C,R\}}$ is similar to
\begin{equation}\label{+-0}
  A_0\oplus A_{01}\oplus A_1\oplus A_{1\infty},
\end{equation}
in which all eigenvalues $\lambda$ of $A_0$ (respectively, $A_{01}$, $A_1$, and $A_{1\infty}$) satisfy the condition
\[
\lambda =0\quad (\text{respectively, $0<|\lambda| <1,$ $|\lambda| =1,$ and $|\lambda| >1$}).
\]
Note that $A_0$ is the same as in~\eqref{+-0a} and $A_{01}\oplus A_1\oplus A_{1\infty}$ is similar to $A_{\textstyle *}$ in~\eqref{+-0a}.

In this section, we prove the following theorem; its part (a) in the case $\mathbb{F=R}$ was proved by Kuiper and Robbin~\cite{Kuip-Robb,Robb}.

\begin{theorem} \label{klas_m}

{\rm(a)} Let $f(x)=Ax$ and $g(x)=Bx$ be linear operators over $\mathbb {F=R}$ or $\mathbb C$ without eigenvalues that are roots of unity, and let $A_0,\dots,A_{1\infty}$ and $B_0,\dots,B_{1\infty}$ be constructed by $A$ and $B$ as in \eqref{+-0}.

\begin{itemize}
  \item[{\rm(i)}] If $\mathbb {F=R}$ then $f$ and $g$ are
 topologically conjugate if and only if
\begin{equation}\label{topol-matr-R}
\begin{matrix}
A_0 \text{ is similar to } B_0,\quad
\size A_{01} =\size B_{01}, \quad \det (A_{01} B_{01})>0,\\
A_1 \text{ is similar to } B_1,\quad
\size A_{1\infty} =\size B_{1\infty}, \quad \det (A_{1\infty} B_{1\infty})>0.
\end{matrix}
\end{equation}

  \item[{\rm(ii)}] If $\mathbb {F=C}$ then $f$ and $g$ are
 topologically conjugate if and only if  \begin{equation}\label{topol-matr-C}
\begin{matrix}
A_0 \text{ is similar to } B_0,\quad
\size A_{01} =\size B_{01}, \\
A_1\oplus \overline A_1 \text{ is similar to } B_1\oplus \overline B_1,\quad
\size A_{1\infty} =\size B_{1\infty}.
\end{matrix}
\end{equation}
\end{itemize}

{\rm(b)}
Each linear operator over $\mathbb {F=R}$ or $\mathbb C$ without eigenvalues that are roots of unity is topologically conjugate to a linear operator whose matrix is a direct sum that is uniquely determined up to permutation of summands and consists of
\begin{itemize}
  \item[{\rm(i)}] in the case $\mathbb {F=R}$:
\begin{itemize}
  \item
any number of summands
\begin{equation}\label{jdi}
J_k(0),\quad [\,1/2\,],\quad  J_k(\lambda )^{\mathbb R},\quad [\,2\,]
\end{equation}
$([\,1/2\,]$ and $[\,2\,]$ are the $1\times 1$ matrices with the entries $1/2$ and  $2),$
in which $\lambda$ is a complex number of modulus $1$ that is determined up to replacement by $\bar\lambda$ and that is not a root of unity,
  \item at most one $1\times 1$ summand $[\,-1/2\,]$, and
  \item at most one $1\times 1$ summand $[\,-2\,]$;
\end{itemize}

  \item[{\rm(ii)}] in the case $\mathbb {F=C}$:
\begin{equation}\label{zjdi}
J_k(0),\quad [\,1/2\,],\quad  J_k(\lambda ),\quad [\,2\,],
\end{equation}
in which $\lambda$ is a complex number of modulus $1$ that is determined up to replacement by $\bar\lambda$ and that is not a root of unity.
\end{itemize}
\end{theorem}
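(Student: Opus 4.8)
The plan is to reduce everything to the already-cited Kuiper--Robbin criterion over $\rr$ (part (a)(i) in the case of operators without nilpotent or modulus-one pieces, i.e.\ Kuiper--Robbin's original theorem) together with the elementary observation that topological conjugacy respects the splitting $\rr^n=\rr^{k}\oplus\rr^{n-k}$ into the ``contracting-expanding'' and ``hyperbolic/nilpotent'' behavior. Concretely, I would proceed as follows.

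\emph{Step 1: Reduce (a)(ii) to (a)(i) by realification.} A linear operator $f(x)=Ax$ over $\cc^n$ is the same as the real linear operator $x\mapsto A^{\rr}x$ on $\rr^{2n}$, and a homeomorphism of $\cc^n$ is exactly a homeomorphism of $\rr^{2n}$. Hence $f,g$ over $\cc$ are topologically conjugate iff $A^{\rr}$ and $B^{\rr}$ are topologically conjugate over $\rr$. Now $A^{\rr}$ is similar over $\cc$ to $A\oplus\overline A$, so its real Jordan pieces in the four modulus-ranges are $A_0^{\rr},\dots$; in particular $(A_1)^{\rr}$ has complex Jordan form $A_1\oplus\overline A_1$, $\size A_0^{\rr}=2\size A_0$, and $\det(A_0^{\rr})=|\det A_0|^2>0$ automatically, which is why the sign conditions in \eqref{topol-matr-R} disappear in \eqref{topol-matr-C}. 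Feeding the conditions of part (a)(i) applied to $A^{\rr},B^{\rr}$ and simplifying gives exactly \eqref{topol-matr-C}; the ``$A_0$ similar to $B_0$'' clause transfers because two nilpotent real matrices are similar over $\rr$ iff their realifications are, and similarly for $A_1\oplus\overline A_1$.

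\emph{Step 2: Prove (a)(i).} The hard direction is sufficiency, and here I would invoke the Kuiper--Robbin theorem: a hyperbolic linear operator (no eigenvalue of modulus $1$) over $\rr$ is topologically conjugate to $g$ iff the restrictions to the stable and unstable subspaces are, and on each of those the invariants are the size and the sign of the determinant (all such contractions/expansions of the same dimension and determinant-sign being conjugate, via a radial construction). What is genuinely new here relative to Kuiper--Robbin is the presence of the non-hyperbolic pieces $A_0$ (nilpotent) and $A_1$ (modulus $1$, not a root of unity). For the nilpotent part one uses that a nilpotent operator is topologically conjugate to another nilpotent operator iff they are \emph{linearly} conjugate, i.e.\ have the same Jordan structure --- this is presumably the content of a lemma proved elsewhere in the paper (it is the natural companion of Lemma~\ref{pro-neryx-tochk-F-n}) --- since the topological type already detects the Jordan block sizes of a nilpotent map. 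For the modulus-one part $A_1$, the relevant fact (Kuiper--Robbin again, or a direct Lyapunov-function argument) is that two such operators with no root-of-unity eigenvalue are topologically conjugate iff similar over $\rr$. One then assembles the product conjugacy on the direct sum $\rr^n = (\text{nilpotent})\oplus(\text{contracting})\oplus(\text{modulus }1)\oplus(\text{expanding})$. For necessity one runs the standard topological invariants: the dimension of the ``stable set'' $\{x:\ f^k(x)\to 0\}$ recovers $\size A_0+\size A_{01}$, the local degree of $f$ near $0$ recovers $\sign\det A_{01}$ and $\sign\det A_{1\infty}$ (this is why the $\det>0$ conditions appear), and the induced map on an appropriate quotient recovers $A_0$ up to similarity and $A_1$ up to similarity.

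\emph{Step 3: Derive (b) from (a).} Given (a), the canonical form in (b) is obtained by choosing, within each topological-conjugacy class, one representative that realizes the invariants: for the nilpotent part take $\bigoplus J_k(0)$ (the Jordan form, genuinely unique up to permutation); for each contracting block pick $[\,1/2\,]$, except that the determinant's sign requires at most one sign-reversing block, realized by a single $[\,-1/2\,]$; symmetrically $[\,2\,]$ and at most one $[\,-2\,]$ for the expanding part; and for the modulus-one part, over $\cc$ keep $J_k(\lambda)$ (with $\lambda$ versus $\bar\lambda$ the genuine ambiguity, since $A_1\oplus\overline A_1$ is the invariant), and over $\rr$ its realification $J_k(\lambda)^{\rr}$. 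Uniqueness is immediate from (a): any two operators of this canonical shape that are topologically conjugate must have equal nilpotent Jordan data, equal total sizes of the contracting and expanding parts, matching determinant signs (hence the same number --- zero or one --- of $[\,-1/2\,]$ and of $[\,-2\,]$), and the same similarity class of the modulus-one part, so they coincide up to permutation and $\lambda\leftrightarrow\bar\lambda$.

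The main obstacle is Step 2's sufficiency in the presence of the modulus-one block $A_1$: gluing a topological conjugacy across the boundary between the contracting/expanding cone and the ``neutral'' directions is exactly the delicate point in the Kuiper--Robbin circle of ideas, and one must be careful that the product homeomorphism $h_{\text{nil}}\times h_{\text{contr}}\times h_{1}\times h_{\text{exp}}$ is honestly a conjugacy for the block-diagonal operator --- which it is, since each factor is --- and then transport it along the linear change of basis that puts $A$ and $B$ simultaneously into the block form \eqref{+-0}. I expect the nilpotent and modulus-one lemmas to be quotable from earlier sections (companions of Lemma~\ref{pro-neryx-tochk-F-n}), so that the real work is bookkeeping the four blocks and the realification dictionary of Step 1.
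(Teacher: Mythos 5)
Your proposal is correct and follows essentially the same route as the paper: part (a)(i) is taken from Kuiper--Robbin, part (a)(ii) is deduced from it by realification (using that a homeomorphism of $\cc^n$ is a homeomorphism of $\rr^{2n}$, that $M^{\mathbb R}$ is similar to $M\oplus\overline M$, and that the determinant conditions become automatic since $\det(M^{\mathbb R})=|\det M|^2>0$), and part (b) follows from (a) together with the (real) Jordan canonical form. The only divergence is that your Step 2 sketches a from-scratch argument for (a)(i), which the paper does not attempt --- it simply cites Kuiper--Robbin for the full criterion \eqref{topol-matr-R}, including the nilpotent and modulus-one pieces --- so that extra (and somewhat hand-waved) material is not needed.
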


\begin{proof}
(a) The statement (i) was proved by Kuiper and Robbin~\cite{Kuip-Robb,Robb}. Let us prove (ii).

The abelian group $V=\mathbb {C}^n$ with respect to addition can be considered both as the $n$-dimensional vector space $V_{\mathbb C}$ over ${\mathbb C}$ and as the $2n$-dimensional vector space $V_{\mathbb R}$ over ${\mathbb R}$. Moreover, we can consider $V_{\mathbb C}$ as a unitary space with
the orthonormal basis
\begin{equation}\label{rwi1}
e_1=[1,0,\dots,0]^T,\ e_2=[0,1,\dots,0]^T,\ \dots,\
e_n=[0,0,\dots,1]^T,
\end{equation}
and $V_{\mathbb R}$ as a Euclidean space with the orthonormal basis
\begin{equation}\label{fow}
e_1,\ ie_1,\ e_2,\ ie_2,\ \dots,\ e_n,\ ie_n.
\end{equation}

For each
\begin{equation*}
v=(\alpha _1+\beta_1i)e_1+\dots+(\alpha _n+\beta_ni)e_n\in V,\quad \alpha _k,\beta_k\in\mathbb R,
\end{equation*}
its length in $V_{\mathbb C}$ and in $V_{\mathbb R}$ is the same:
\begin{equation*}
  |v|=(\alpha _1^2+\beta_1^2+\dots+\alpha _n^2+\beta_n^2)^{1/2}.
\end{equation*}
Thus,
\begin{equation}\label{4.26a}
\parbox{25em}
{a mapping $h:V\to V$ is a homeomorphism of $V_{\mathbb C}$ if and only if $h$ is a homeomorphism of $V_{\mathbb R}$.}
\end{equation}

Each linear operator $f:V_{\mathbb C}\to V_{\mathbb C}$ defines the linear operator $f^{\mathbb R}:V_{\mathbb R}\to V_{\mathbb R}$ ($f$ and $f^{\mathbb R}$ coincide as mappings on the abelian group $V$). By~\eqref{4.26a},
\begin{equation}\label{4.26}
\parbox{25em}
{two linear operators $f,g:V_{\mathbb C}\to V_{\mathbb C}$ are topologically conjugate if and only if $f^{\mathbb R},g^{\mathbb R}:V_{\mathbb R}\to V_{\mathbb R}$ are topologically conjugate.}
\end{equation}

Let $f(x)=Ax$ and $g(x)=Bx$ be linear operators on $V_{C}$ without eigenvalues that are roots of unity. Clearly, $A$ and $B$ are their matrices in the orthonormal basis
\eqref{rwi1}. Considering $f$ and $g$ as the linear operators $f^{\mathbb R}$ and $g^{\mathbb R}$ of $V_{\mathbb R}$, we find that the matrices of $f^{\mathbb R}$ and $g^{\mathbb R}$ in the basis \eqref{fow} are the realifications $A^{\mathbb R}$ and $B^{\mathbb R}$ of $A$ and $B$ (see \eqref{fsy}).

Since
\[
S^{-1}AS=A_0\oplus A_{01}\oplus A_1\oplus A_{1\infty}
\]
for some nonsingular $S$, we have
\begin{equation*}
  (S^{\mathbb R})^{-1}A^{\mathbb R}S^{\mathbb R}=A_0^{\mathbb R}\oplus A_{01}^{\mathbb R}\oplus A_1^{\mathbb R}\oplus A_{1\infty}^{\mathbb R}.
\end{equation*}
Analogously,
\[
B^{\mathbb R}\text{ is similar to } B_0^{\mathbb R}\oplus B_{01}^{\mathbb R}\oplus B_1^{\mathbb R}\oplus B_{1\infty}^{\mathbb R}.
\]

By \eqref{4.26} and the statement (i) of Theorem \ref{klas_m}(a), $f$ and $g$ are topologically conjugate if and only if $f^{\mathbb R}$ and $g^{\mathbb R}$ are topologically conjugate if and only if
\begin{equation}\label{tr-R}
\begin{matrix}
A_0^{\mathbb R} \text{ is similar to } B_0^{\mathbb R},\quad
\size A_{01}^{\mathbb R} =\size B_{01}^{\mathbb R}, \quad \det (A_{01}^{\mathbb R} B_{01}^{\mathbb R})>0,\\
A_1^{\mathbb R} \text{ is similar to } B_1^{\mathbb R},\quad
\size A_{1\infty}^{\mathbb R} =\size B_{1\infty}^{\mathbb R}, \quad \det (A_{1\infty}^{\mathbb R} B_{1\infty}^{\mathbb R})>0.
\end{matrix}
\end{equation}

For each complex matrix $M$, its realification $M^{\mathbb R}$ is similar to $M\oplus \overline M$ (see \eqref{fff}) because
\[
 \begin{bmatrix}
 1&1\\-i&i
 \end{bmatrix}^{-1}
 \begin{bmatrix}
 a&-b\\b&a
 \end{bmatrix}
\begin{bmatrix}
 1&1\\-i&i
 \end{bmatrix}=
 \begin{bmatrix}
 a+bi&0\\0&a-bi
 \end{bmatrix}.
\]
Since the Jordan canonical form of $A_0$ is a nilpotent Jordan matrix, $\overline A_0$ is similar to $A_0$. Thus, the condition ``$A_0^{\mathbb R}$ is similar to  $B_0^{\mathbb R}$'' is equivalent to the condition ``$A_0\oplus \overline A_0$ is similar to  $B_0\oplus \overline B_0$'' is equivalent to the condition ``$A_0$ is similar to  $B_0$''. The condition ``$\size A_{01}^{\mathbb R} =\size B_{01}^{\mathbb R}$'' is equivalent to the condition ``$\size A_{01}=\size B_{01}$''. The condition ``$\det (A_{01}^{\mathbb R} B_{01}^{\mathbb R})>0$'' always holds since
\[ \det (A_{01}^{\mathbb R} B_{01}^{\mathbb R})=\det (A_{01} B_{01})^{\mathbb R}=\det (A_{01} B_{01}\oplus \overline{A_{01} B_{01}})>0.
\]

We consider the remaining 3 conditions in \eqref{tr-R} analogously and get that \eqref{tr-R} is equivalent to \eqref{topol-matr-C}, which proves the statement (ii).
\bigskip

(b) This statement follows from (a) and the theorems about Jordan canonical form and real Jordan canonical form \cite[Theorems 3.1.11 and 3.4.5]{hor}
\end{proof}

\section{Affine operators without fixed points}\label{topola}

In this section, we prove the following theorem, which gives
a criterion of topological conjugacy and a canonical form under topological conjugacy for affine operators that have no fixed points.

\begin{theorem}\label{klik}
{\rm(a)} Let $f(x)=Ax+b$ and $g(x)= Cx+d$ be affine operators over $\mathbb{F=C}$ or $\mathbb R$ without fixed points. Let $A_{\textstyle *}, A_0$ and $C_{\textstyle *},C_0$ be constructed by $A$ and $C$ as in \eqref{+-0a}.

\begin{itemize}
  \item If $\mathbb{F=C}$ then $f$ and $g$ are topologically conjugate if and only if $A_0$ is similar to $B_0$.
  \item If $\mathbb{F=R}$ then $f$ and $g$ are topologically  conjugate if and only if the determinants of $A_{\textstyle *}$ and $C_{\textstyle *}$ have the same sign $($i.e., $\det (A_{\textstyle *}C_{\textstyle *})>0)$ and $A_0$ is similar to $C_0$.
\end{itemize}
{\rm(b)} Each affine operator $f$ over $\mathbb{F=C}$ or $\mathbb R$ without fixed point is topologically conjugate to exactly one affine operator of the form
\begin{equation}\label{htw}
x\mapsto (I_k\oplus J_0)x + [1,0,\dots,0]^T
\end{equation}
or, only if $\mathbb {F=R}$,
\begin{equation}\label{htw1}
x\mapsto (I_k\oplus [\,-1\,]\oplus J_0)x + [1,0,\dots,0]^T,
\end{equation}
in which $k\ge 1$ and $J_0$ is a nilpotent Jordan matrix determined by $f$ uniquely, up to permutations of blocks $(J_0$ is absent if $f$ is bijective$)$.
\end{theorem}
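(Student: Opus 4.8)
The plan is to reduce $f$, through a chain of conjugacies that are successively affine, biregular, and then merely topological, to exactly one of the normal forms \eqref{htw}--\eqref{htw1}, and separately to extract from $f$ the topological invariants behind the classification in~(a); part~(b) then follows at once. First I would split $\mathbb{F}^n=V_1\oplus V'$ with $V_1=\ker(A-I)^n$ and $V'=\Image(A-I)^n$, both $A$-invariant, with $A-I$ invertible on $V'$. Writing $b=b_1+b'$ accordingly and conjugating by the translation $x\mapsto x+p$ with $(A-I)p=-b'$ on $V'$ replaces $f$ by a direct sum of an affine operator $f_1\colon x\mapsto (I+N)x+b_1$ on $V_1$ (with $N:=A|_{V_1}-I$ nilpotent) and a linear operator $x\mapsto A'x$ on $V'$, where $A'$ is similar to $A_{*}'\oplus A_0$ with $A_{*}'$ invertible without eigenvalue~$1$, $A_0$ the nilpotent matrix of~\eqref{+-0a}, and $A_{*}\sim(I+N)\oplus A_{*}'$. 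Since the $A'$-part fixes $0$, $f$ has no fixed point iff $b_1\notin\Image(N)$; hence $f_1$ is a nonsingular fixed-point-free affine operator whose matrix has the single eigenvalue~$1$, and Blanc's classification~\cite{Blanc} makes it biregularly --- hence topologically --- conjugate to the pure translation $(x_1,\dots,x_{m_1})\mapsto(x_1+1,x_2,\dots,x_{m_1})$ on $V_1$, where $m_1=\dim V_1$ (in this case --- all eigenvalues equal to the real number~$1$ --- the reduction can be carried out over $\mathbb{R}$ as well). Consequently $f$ is topologically conjugate to $F\colon(t,y,z)\mapsto(t+1,\,Ly,\,A_0z)$ on $\mathbb{F}\times\mathbb{F}^{q}\times\mathbb{F}^{r}$, where $L:=I_{m_1-1}\oplus A_{*}'\in GL_q(\mathbb{F})$ (so $\det L=\det A_{*}$) and the $z$-block is absent when $A$ is invertible.

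The heart of the proof is that the translation in the first coordinate lets $L$ be deformed freely inside its connected component of $GL_q(\mathbb{F})$: given any $L'$ in that component, a path in $GL_q(\mathbb{F})$ from $I$ to $L(L')^{-1}$, propagated by the rule $\Phi(s+1)=L\,\Phi(s)\,(L')^{-1}$, yields a continuous $\Phi\colon\mathbb{R}\to GL_q(\mathbb{F})$ for which $h(t,y,z):=(t,\,\Phi(\operatorname{Re}t)\,y,\,z)$ is a self-homeomorphism conjugating $F$ to $(t,y,z)\mapsto(t+1,\,L'y,\,A_0z)$. As $GL_q(\mathbb{C})$ is connected and $GL_q(\mathbb{R})$ has two components, I may take $L'=I_q$ over $\mathbb{C}$, and $L'=I_q$ or $L'=I_{q-1}\oplus[-1]$ over $\mathbb{R}$ according to the sign of $\det A_{*}$. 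A last linear conjugacy on the $z$-block, replacing $A_0$ by its nilpotent Jordan form $J_0$ and leaving the translation untouched, brings $f$ to exactly one of \eqref{htw} with $k=\size A_{*}$ (which is $\ge1$ since $1\in\operatorname{spec}A$) or, over $\mathbb{R}$, \eqref{htw1} with $k=\size A_{*}-1$ (which is $\ge1$ since $\det A_{*}<0$ forces $A_{*}'\ne0$). This already gives existence in~(b) and the ``if'' direction of~(a): if $A_0\sim C_0$ and, over $\mathbb{R}$, $\det A_{*}$ and $\det C_{*}$ have the same sign, then $f$ and $g$ reach the same normal form.

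For the converse in~(a), let $g=h^{-1}fh$ with $h$ a homeomorphism. Then $h$ maps $\Image(g^k)$ onto $\Image(f^k)$ for all $k$; since these are affine subspaces of $\mathbb{F}^n$, invariance of domain gives $\rank A^k=\rank C^k$ for every $k\ge0$. Letting $k\to\infty$ yields $\size A_{*}=\size C_{*}$, whence $\rank A_0^k=\rank C_0^k$ for all $k$; this sequence of ranks determines the block sizes of a nilpotent matrix, so $A_0\sim C_0$. Over $\mathbb{R}$ one more invariant appears: on the stable image $X_f:=\bigcap_k\Image(f^k)$, an affine subspace of $\mathbb{F}$-dimension $\size A_{*}$, the operator $f$ acts bijectively without fixed point and with linear part similar to $A_{*}$, it generates a free and proper $\mathbb{Z}$-action (as the reduction above, applied to this bijective operator, shows), and the quotient manifold is orientable precisely when $\det A_{*}>0$. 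Since $h$ carries $X_g$ onto $X_f$ and descends to a homeomorphism of quotients, $\det A_{*}$ and $\det C_{*}$ have the same sign. This proves~(a); uniqueness in~(b) follows, since distinct forms~\eqref{htw} differ in $J_0$ up to block permutation, while over $\mathbb{R}$ a form~\eqref{htw} has $\det A_{*}=1>0$ and a form~\eqref{htw1} has $\det A_{*}=-1<0$.

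I expect the main obstacles to be the gauging step --- stating cleanly that the translation direction permits an arbitrary deformation of $L$ within its component of $GL_q(\mathbb{F})$, and verifying that the resulting $\Phi$ and $h$ are genuine homeomorphisms --- and, on the necessity side over $\mathbb{R}$, identifying $\operatorname{sign}\det A_{*}$ with the orientability of the quotient of $X_f$ by the generated $\mathbb{Z}$-action. Checking that $h$ respects $\Image(f^k)$ and $X_f$ is routine but wants care, since $f$ itself need not be a homeomorphism.
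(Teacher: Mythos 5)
Your proposal is correct and its skeleton matches the paper's: split off the nilpotent part, use Blanc's biregular conjugacy to turn the unipotent fixed-point-free piece into a translation, absorb the remaining invertible linear part by a homeomorphism that exploits the free translation coordinate, and prove uniqueness from the dimensions of the images of the iterates plus an orientation invariant on the stable affine subspace. Two of your ingredients genuinely differ from the paper's, and both are defensible. For the absorption step the paper writes $h(x,y)=(x,\varepsilon F^{x}y)$ with $F=e^{G}$, which forces it to invoke the real matrix logarithm theorem of Horn--Johnson, to split $D$ as $P\oplus(-Q)$, and to dispose of the leftover $-I_q$ by the extra conjugation $(I_1,[1])\oplus(-I_2,0)\sim(I_3,e_1)$; your cocycle $\Phi(s+1)=L\,\Phi(s)\,(L')^{-1}$ built from an arbitrary path in $GL_q(\mathbb F)$ from $I$ to $L(L')^{-1}$ reaches any $L'$ in the same connected component in one stroke and needs only the count of components of $GL_q(\mathbb F)$ --- a cleaner and slightly more general device. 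For the sign invariant the paper restricts $h$ to the stable subspace and reads off $\orien(h_{*})\varepsilon=\delta\,\orien(h_{*})$ directly, which is more elementary than your passage to the quotient manifold of the $\mathbb Z$-action and its orientability; your route works but obliges you to verify freeness (the barycenter argument for affine maps) and properness, which you correctly propose to import from the already-proved reduction. The one place you should be explicit is the appeal to Blanc over $\mathbb R$: his theorem is stated for algebraically closed fields, and the paper avoids the issue by quoting only his explicit biregular map conjugating $(J_m(1),e_1)$ to $(I_m,e_1)$, whose coefficients are rational and hence real; you should either do the same or note that every step of the reduction of the unipotent part (real Jordan form, the triangular matrix $S$, Blanc's polynomial $h$) is defined over $\mathbb R$ when all eigenvalues equal $1$. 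The remaining unipotent blocks with zero translation part can then be absorbed into your $L$, since they lie in the identity component of $GL_q(\mathbb R)$, so nothing is lost.
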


We give an affine operator $f(x)=Ax+b$ by the pair $(A,b)$ and write $f=(A,b)$.

For two affine operators $f:\mathbb F^{m}\to \mathbb F^{m}$ and $g:\mathbb F^{n}\to \mathbb F^{n},$ define the affine operator $f\oplus g:\mathbb F^{m+n}\to \mathbb F^{m+n}$ by
\[
(f\oplus g)(\begin{bmatrix}x \\y\end{bmatrix}) :=\begin{bmatrix}
                     f(x) \\
                     g(y) \\
                   \end{bmatrix};
\]
that is,
\begin{equation*}\label{rwi}
(A,b)\oplus (C,d)=\left(\begin{bmatrix}
                     A & 0 \\
                     0 & C \\
                   \end{bmatrix},
\begin{bmatrix}
                     b \\
                     d \\
                   \end{bmatrix}
 \right).
\end{equation*}
 We write $f\stackrel{\mathbb F}{\sim}g$ if $f$ and $g$ are topologically conjugate over $\mathbb F$.
Clearly,
\begin{equation}\label{xol}
f\stackrel{\mathbb F}{\sim} f'\text{ and } g\stackrel{\mathbb F}{\sim} g'\quad\Longrightarrow\quad f\oplus g\stackrel{\mathbb F}{\sim} f'\oplus g'.
\end{equation}

\subsection{Reduction to the canonical form}
\label{red}

In this section, we sequentially reduce an affine operator $y=Ax+b$ over $\mathbb{F=C}$ or $\mathbb R$ without fixed point by transformations of topological conjugacy to \eqref{htw} or \eqref{htw1}.
\bigskip

\emph{Step 1: reduce $y=Ax+b$ to  the form
\begin{equation}\label{twj}
\bigoplus_{i=1}^p(J_{m_i}(1),a_i)\oplus
\bigoplus_{i=p+1}^r(J_{m_i}(1),a_i)\oplus
(J_0,s)\oplus (B,c),
\end{equation}
in which $J_0$ is the Jordan canonical form of~$A_0$ $($see \eqref{+-0a}$)$, $1$ and $0$ are not eigenvalues of $B$, each of $a_1,\dots, a_p$ has a nonzero first coordinate, each of $a_{p+1},\dots, a_r$ has the zero first coordinate.}

We make this reduction by transformations of linear conjugacy \eqref{jst} over~$\mathbb F$.
\bigskip

\emph{Step 2: reduce \eqref{twj} to  the form
\begin{equation}\label{twk}
\bigoplus_{i=1}^p(J_{m_i}(1),a_i)\oplus
\bigoplus_{i=p+1}^r(J_{m_i}(1),0)\oplus
(J_0,0)\oplus (B,0),
\end{equation}
in which every $a_i$ has a nonzero  first coordinate.}

We make this reduction by using \eqref{xol} and the conjugations
\begin{equation}\label{fpm}
(J_{m}(1),a)\stackrel{\mathbb F}{\sim} (J_{m}(1),0),\quad
(J_0,s)\stackrel{\mathbb F}{\sim} (J_0,0),\quad
(B,c)\stackrel{\mathbb F}{\sim} (B,0),
\end{equation}
in which the first coordinate of $a$ is zero. The conjugations \eqref{fpm} hold by Lemma \ref{pro-neryx-tochk-F-n} since $
(J_{m}(1),a)$, $(J_0,s)$, and $(B,c)$ have fixed points (for example, $
(J_{m}(1),a)$ has a fixed point, which is a solution of the system  $J_{m}(1)x+a=x$; i.e., of the system $J_m(0)x=-a$).

Note that $p\ge 1$ since otherwise \eqref{twk} is a linear operator with the fixed point $0$, but $f$ has no fixed point.
\bigskip

\emph{Step 3: reduce \eqref{twk} to  the form
\begin{equation}\label{twy}
\bigoplus_{i=1}^p(J_{m_i}(1),e_1)\oplus (C,0)\oplus
(J_0,0),
\end{equation}
in which $e_1=[1,0,\dots,0]^T$ and
$
C:=\bigoplus_{i=p+1}^r J_{m_i}(1)\oplus B
$
is nonsingular.}

We use the conjugation
\begin{equation}\label{gai}
(J_m(1),a)\stackrel{\mathbb F}{\sim} (J_m(1),e_1),
\end{equation}
in which the first coordinate of $a$ is nonzero; that is, $a$ is represented in the form
\[
a=b[1,a_2,\dots,a_n]^T,\qquad b\ne 0.
\]
The conjugation \eqref{gai} is linear (see \eqref{jst}); it  holds since
\[(SJ_m(1)S^{-1},Se_1)= (J_m(1),a)
\] for
\[
S=b\begin{bmatrix}
     1 &&&&0  \\
     a_2 & 1 \\ a_3&a_2&1\\\ddots&\ddots&\ddots
&\ddots\\[-2mm]a_n&\ddots&a_3&a_2&1   \end{bmatrix}.
\]

\bigskip

\emph{Step 4: reduce \eqref{twy} to  the form
\begin{equation}\label{twyj}
\bigoplus_{i=1}^p(I_{m_i},e_1)\oplus (C,0)\oplus
(J_0,0).
\end{equation}}
We use the conjugation
\begin{equation}\label{gail}
(J_m(1),e_1)\stackrel{\mathbb F}{\sim} (I_m,e_1),
\end{equation}
which was constructed by Blanc \cite{Blanc}; he proved that
\[h  (J_m(1),e_1)=
(I_m,e_1)h ,\]
in which the homeomorphism $h: \mathbb F^{m}\to \mathbb F^{m}$ is biregular (see \eqref{rti}) and is
defined by
\[
h: (x_1,\dots,x_m)
        \mapsto
 (x_1,x_2+P_1,x_3+P_2,\dots,x_m+P_{m-1})
\]
with
\[
P_{k}:=(-1)^{k} \binom{x_1+k-1}{k+1}k+\sum^{k-1}_{i=1} (-1)^i\binom{x_1+i-1}{i}  x_{k+1-i}
\]
and
\[ \binom{\varphi }{r} :=\frac{\varphi(\varphi-1)(\varphi-2) \cdots (\varphi-r+1)}{r!}\quad \text{for each $\varphi\in\mathbb F[x_1]$}.
\]
\bigskip

\emph{Step 5: reduce \eqref{twyj} to  the form
\begin{equation}\label{twkd}
(I_1,[1])\oplus (D,0)\oplus
(J_0,0) ,
\end{equation}
in which $D:=I\oplus C$ is nonsingular.}

We use the conjugations
\[
\bigoplus_{i=1}^p(I_{m_i},e_1)\stackrel{\mathbb F}{\sim} (I_p,[1,\dots,1]^T)\oplus (I_q,0)\stackrel{\mathbb F}{\sim} (I_1,[1])\oplus(I_{q+p-1},0);
\]
the last conjugacy holds since $(I_2,[1,1]^T)\stackrel{\mathbb F}{\sim} (I_2,e_1)$, which follows from
\[
(S^{-1}I_2S,S^{-1}\begin{bmatrix}
     1 \\
     1 \\
   \end{bmatrix})=(I_2,e_1),\qquad S:=\begin{bmatrix}
     1 & 0 \\
     1 & 1 \\
   \end{bmatrix}\text{ (see \eqref{jst}).}
\]
\bigskip

\emph{Step 6: reduce \eqref{twkd} to  the form \eqref{htw} or \eqref{htw1}.}
In this step we consider two cases.
\bigskip

\emph{Case $\mathbb {F=R}.$}
For $\varepsilon =\pm 1$ and each nonsingular real $m\times m$ matrix $F$
that has an even number of Jordan blocks of each
size for every negative eigenvalue, we have the conjugation
\begin{equation}\label{dtk}
f\stackrel{\mathbb R}{\sim} g,\quad f:=(I_1,[1])\oplus(\varepsilon F,0),\quad g:=(I_1,[1])\oplus(\varepsilon I_m,0).
\end{equation}
Indeed, $ g=h^{-1}fh
$ for the
mapping $h: \mathbb R^{m+1}\to \mathbb R^{m+1}$ defined by
\begin{equation*}\label{kitf}
h: \begin{bmatrix}
     x \\
     y \\
   \end{bmatrix}\mapsto \begin{bmatrix}
     x \\
     \varepsilon F^{x}y \\
   \end{bmatrix},\qquad x\in\mathbb R,\ y\in\mathbb R^m
\end{equation*}
 since
\[
hg\begin{bmatrix}
     x \\
     y \\
   \end{bmatrix}= h\begin{bmatrix}
     x+1 \\
     \varepsilon y \\
   \end{bmatrix}= \begin{bmatrix}
     x+1 \\
     \varepsilon^2 F^{x+1}y \\
   \end{bmatrix}=f\begin{bmatrix}
     x \\
     \varepsilon F^{x}y \\
   \end{bmatrix}=fh\begin{bmatrix}
     x \\
     y \\
   \end{bmatrix}.
\]
The mapping $h$ is a homeomorphism since
\begin{itemize}
  \item
$h$ is continuous because the series
\begin{equation}\label{gpe}
F^{x}=e^{xG}=I+xG+\frac{(xG)^2}{2!} +\frac{(xG)^3}{3!}+\cdots
\end{equation}
has indefinite radius of convergence, where $G$ is a real matrix such that $F=e^G$ (it exists since by \cite[Theorem 6.4.15(c)]{h-J} for a real $M$ there is a real $N$ such that $M=e^N$ if and only if $M$ is nonsingular and has an even number of Jordan blocks of each size for every negative eigenvalue);

  \item
the inverse mapping
\begin{equation*}\label{kit}
h: \begin{bmatrix}
     x \\
     y \\
   \end{bmatrix}\mapsto \begin{bmatrix}
     x \\
     \varepsilon F^{-x}y \\
   \end{bmatrix},\qquad x\in\mathbb R,\ y\in\mathbb R^m
\end{equation*}
is continuous too.
\end{itemize}
This proves \eqref{dtk}.

\indent

Applying transformations of linear conjugation \eqref{jst} to \eqref{twkd}, we reduce $D$ to the form $P\oplus (-Q)$, in which $P$ is a nonsingular real $p\times p$ matrix without negative real eigenvalues, and $Q$ is a nonsingular real $q\times q$ matrix whose eigenvalues are positive real numbers. The affine operator \eqref{twkd} takes the form
\begin{equation*}\label{twkdw}
(I_1,[1])\oplus (P,0)\oplus (-Q,0)\oplus
(J_0,0);
\end{equation*}
by \eqref{xol} and \eqref{dtk}, it is topologically conjugate to
\begin{equation}\label{skdw}
(I_1,[1])\oplus (I_p,0)\oplus (-I_q,0)\oplus
(J_0,0).
\end{equation}

Taking $\varepsilon =1$ and $F=-I_2$ in \eqref{dtk}, we obtain
\begin{equation*}\label{rwk}
(I_1,[1])\oplus(-I_2,0) \stackrel{\mathbb R}{\sim} (I_3,e_1).
\end{equation*}
Applying this conjugation several times, we reduce \eqref{skdw} to the form \eqref{htw} or \eqref{htw1}.
We have proved that each affine operator over $\mathbb R$ without fixed point is topologically conjugate to \eqref{htw} or \eqref{htw1}.
\bigskip

\emph{Case $\mathbb {F=C}$.} Let us prove that
\begin{equation}\label{gswi}
f \stackrel{\mathbb C}{\sim} g,\quad f:=(I_1,[1])\oplus(D,0),\quad g:=(I_1,[1])\oplus(I_m,0),
\end{equation}
in which $D$ is the nonsingular complex $m\times m$ matrix from \eqref{twkd}.
Indeed, $ g=h^{-1}fh,
$ where
$h: \mathbb C^{m+1}\to \mathbb C^{m+1}$ is defined by
\begin{equation*}\label{kits}
h: \begin{bmatrix}
     x \\
     y \\
   \end{bmatrix}\mapsto \begin{bmatrix}
     x \\
     D^{x}y \\
   \end{bmatrix},\qquad x\in\mathbb C,\ y\in\mathbb C^m.
\end{equation*}
The mapping $h$ is a homeomorphism since
$D^{x}$ is represented in the form
\eqref{gpe} with $F:=D$ (the matrix $G$ exists since by \cite[Theorem 6.4.15(a)]{h-J} if $M$ is nonsingular then there is a complex $N$ such that $M=e^N$).

This proves \eqref{gswi}. Using it, reduce \eqref{twkd} to the form \eqref{htw}.
We have proved that each affine operator over $\mathbb C$ without fixed point is topologically conjugate to \eqref{htw}.

\subsection{Uniqueness of the canonical form}
\label{uniq}

In this section, we prove the uniqueness of the canonical form defined in Theorem~\ref{klik}(b).

Let $f$ and $g$ be two affine operators of the form \eqref{htw} or \eqref{htw1}; that is,
\[
f=f_{\textstyle *}\oplus f_0,\quad
f_{\textstyle *}=(I_{(\varepsilon)},e_1):{\mathbb F}^p\to{\mathbb F}^p,\quad
f_0=(J_0,0):{\mathbb F}^{n-p}\to{\mathbb F}^{n-p},
\]
and
\[
g=g_{\textstyle *}\oplus g_0,\quad
g_{\textstyle *}=(I_{(\delta)},e_1):{\mathbb F}^q\to{\mathbb F}^q,\quad
g_0=(J_0^{\prime},0):{\mathbb F}^{n-q}\to{\mathbb F}^{n-q},
\]
in which $\varepsilon ,\delta =\pm 1$,
\[I_{(1)}:=I,\quad I_{(-1)}:=I\oplus [\,-1\,],\]
and $J_0$ and $J_0^{\prime}$ are nilpotent Jordan matrices.
Let $f$ and $g$ be  topologically conjugate.

For each $i=1,2,\dots$, the images of $f^i$ and $g^i$ are the sets
\begin{equation*}\label{tpo}
V_i:= f^i{\mathbb F}^n={\mathbb F}^p\oplus J_0^i{\mathbb F}^{n-p},\qquad W_i:= g^i{\mathbb F}^n={\mathbb F}^q\oplus J_0^{\prime i}{\mathbb F}^{n-q},
\end{equation*}
and so they are vector subspaces of ${\mathbb F}^n$ of dimensions
\begin{equation}\label{suk}
\dim V_i=p+\rank J_0^i,\qquad \dim W_i=q+\rank J_0'^i.
\end{equation}

Since $f$ and $g$ are topologically conjugate, there exists a homeomorphism $h:\mathbb {F}^n\to \mathbb {F}^n$ such that
$hf=g h.$
Then
\begin{equation}\label{hyr}
hf^i= g^i h, \quad
hf^i\mathbb {F}^n= g^i h\,\mathbb{F}^n=g^i \mathbb {F}^n,\quad
h\,V_i=W_i.
\end{equation}
By \cite{h-w},  each two homeomorphic vector spaces have the same dimension; that is, the last equality implies
\begin{equation*}\label{gor}
\dim V_i=\dim W_i,\qquad i=1,2,\ldots
\end{equation*}

Fix any odd integer $m\ge\max(n-p,n-q)$. Then $J_0^m=J_0^{\prime m}=0$ and by~\eqref{suk}
\[
p=\dim V_m=\dim W_m=q.
\]

Thus, $f_{\textstyle *}=(I_{(\varepsilon)},e_1)$ and $g_{\textstyle *}=(I_{(\delta)},e_1)$ are affine bijections $V_{\textstyle *}\to V_{\textstyle *}$ on the same space
\begin{equation*}\label{syu}
V_{\textstyle *}:=V_m=W_m=\mathbb F^p.
\end{equation*}
By \eqref{hyr}, the restriction of $h$ to $V_{\textstyle *}$ gives some homeomorphism  $h_{\textstyle *}:V_{\textstyle *} \to V_{\textstyle *}$.
Restricting the equality $hf=g h$ to $V_{\textstyle *}$, we obtain
\begin{equation}\label{bij}
h_{\textstyle *} f_{\textstyle *}=g_{\textstyle *} h_{\textstyle *}.
\end{equation}
Therefore, $f_{\textstyle *}$ and $g_{\textstyle *}$ are topologically conjugate.

If $\mathbb {F=C}$, then $\varepsilon =\delta=1$.

Let $\mathbb {F=R}$. For each homeomorphism $\varphi$ on a Euclidean space, write $\orien (\varphi)= 1$ or $-1$ if it is orientation preserving or reversing. In particular, if $\varphi$ is a nonsingular affine operator $(A,b)$, then
\[
\orien (\varphi)=
  \begin{cases}
    1 & \text{if }\det A>0, \\
    -1 & \text{if }\det A<0.
  \end{cases}
\]
By \eqref{bij},
\[
\orien (h_{\textstyle *}f_{\textstyle *})=\orien (g_{\textstyle *} h_{\textstyle *}),\ \ \orien (h_{\textstyle *})\orien(f_{\textstyle *})=\orien (g_{\textstyle *})\orien( h_{\textstyle *}),\ \
\orien (h_{\textstyle *})\varepsilon =\delta \orien( h_{\textstyle *}),
\]
and so $\varepsilon =\delta$.

The nilpotent Jordan matrices
$J_0$ and $J_0'$ \emph{coincide up to permutation of blocks}
since by \eqref{suk}
the number of their Jordan blocks is equal to $n-\dim V_1$, the number of their Jordan blocks of size $\ge2$ is equal to $(n-\dim V_2)-(n-\dim V_1)$, the number of their Jordan blocks of size $\ge3$ is equal to $(n-\dim V_3)-(n-\dim V_2)$, and so on.

Thus, $\varepsilon =\delta$ and $f$ coincides with $g$ up to permutation of blocks in $J_0$ and~$J_0^{\prime}$.

\subsection{Conclusion}

Let  $f(x)=Ax+b$ be an affine operator over $\mathbb {F\in\{C,R\}}$.

We have showed in Sections \ref{red} and \ref{uniq} that $f$ is topologically conjugate to exactly one affine operator of the form  \eqref{htw} or \eqref{htw1}, which proves
the statement (b) of Theorem \ref{klik}.

Let $A_{\textstyle *}$ and $A_0$ be any nonsingular and nilpotent
parts of $A$ defined in~\eqref{+-0a}. Using the reduction of $f$ to
the canonical form described in Section \ref{red},
we find that
\begin{itemize}
  \item $f$ reduces to the form \eqref{htw} if $\mathbb {F=R}$ and $\det A_{\textstyle *}>0$, or if $\mathbb {F=C}$.

  \item  $f$ reduces to the form \eqref{htw1} if $\mathbb {F=R}$ and $\det A_{\textstyle *}<0$,
\end{itemize}
and $J_0$ in \eqref{htw} and \eqref{htw1} is the Jordan canonical form of $A_0$.
This proves the statement (a) of Theorem~\ref{klik}.

\begin{corollary}
An affine operator $f(x)=Ax+b$ over $\mathbb {C}$ and $\mathbb{R}$ has no fixed point if and only if it is linearly conjugate to an affine operator of the form
\begin{equation}\label{tso}
g(x)=(J_k(1)\oplus C)x+d,
\end{equation}
in which $d$ has a nonzero first coordinate. \end{corollary}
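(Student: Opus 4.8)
The plan is to read the corollary off from Step~1 of the reduction in Section~\ref{red} together with the elementary criterion for the existence of a fixed point. Recall that $f(x)=Ax+b$ has a fixed point precisely when the system $(A-I)x=-b$ is solvable, i.e.\ $-b\in\Image(A-I)$, and that a linear (indeed any) conjugacy carries fixed points of one affine operator bijectively onto those of the other, so ``having a fixed point'' is invariant under linear conjugacy, exactly as observed in the proof of Lemma~\ref{pro-neryx-tochk-F-n}.

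For the ``only if'' direction, assume $f$ has no fixed point. By Step~1 it is linearly conjugate to an operator of the form \eqref{twj}. If the number $p$ of blocks $(J_{m_i}(1),a_i)$ with $a_i$ of nonzero first coordinate were $0$, then every summand of \eqref{twj} would possess a fixed point: the blocks $(J_{m_i}(1),a_i)$ with $i>p$, since under the subdiagonal convention for $J_m(\lambda)$ the image $\Image J_{m_i}(0)$ is exactly the set of vectors with zero first coordinate and $a_i$ has zero first coordinate; and the blocks $(J_0,s)$ and $(B,c)$, since $1$ is not an eigenvalue of $J_0$ or $B$, so $J_0-I$ and $B-I$ are invertible. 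Then \eqref{twj}, hence $f$, would have a fixed point, a contradiction; thus $p\ge1$. Permuting summands, which is itself a linear change of basis, so that a block $(J_{m_1}(1),a_1)$ with $a_1$ of nonzero first coordinate is listed first, we bring $f$ to the form $g(x)=(J_k(1)\oplus C)x+d$ with $k=m_1$, $C$ the direct sum of the remaining matrices, and $d$ whose first coordinate equals that of $a_1$ and hence is nonzero.

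For the ``if'' direction it is enough, by the invariance noted above, to verify that such a $g(x)=(J_k(1)\oplus C)x+d$ with first coordinate $d_1\ne0$ has no fixed point. A fixed point would solve $\bigl(J_k(0)\oplus(C-I)\bigr)x=-d$; but the first coordinate of $\bigl(J_k(0)\oplus(C-I)\bigr)x$ is $0$ for every $x$, while that of $-d$ is $-d_1\ne0$, so the system is inconsistent.

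I do not expect a genuine obstacle here. The only points deserving care are keeping track of the paper's subdiagonal convention for $J_m(\lambda)$ --- so that $\Image J_m(0)$ consists of the vectors with vanishing first coordinate and ``no fixed point'' translates into ``$d$ has a nonzero first coordinate'' rather than a nonzero last one --- and recording that a permutation of Jordan blocks is a linear conjugacy, which is what keeps the whole argument within linear (not merely topological) conjugacy.
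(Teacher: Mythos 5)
Your proof is correct and follows essentially the same route as the paper: the ``only if'' direction comes from Step~1's linear reduction to the form \eqref{twj} together with the observation that $p\ge 1$, and the ``if'' direction from the fact that the first coordinate of $g(v)-v$ equals the nonzero first coordinate of $d$. The only (harmless) difference is that you justify $p\ge1$ by exhibiting a fixed point of each summand of \eqref{twj} directly, whereas the paper cites the remark at Step~2, which routes through the topological conjugacy \eqref{fpm}; your version stays entirely within linear conjugacy, which is slightly cleaner.
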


Indeed, \eqref{tso} has no fixed point since the first coordinates of $g(v)$ and $v$ are distinct for all $v$. Conversely, if $f(x)=Ax+b$ has no fixed point, then it is linearly conjugate to an affine operator of the form \eqref{twj}, in which $p\ge 1$ by Step 2.

\section*{Acknowledgments}

The author gratefully acknowledges the many helpful suggestions of S.I.~Maksymenko and V.V.~Sharko during the preparation of the paper.

\end{document}